\documentclass{amsart}

\usepackage{latexsym,amsmath,amsfonts,amscd,amssymb, amsthm}

\theoremstyle{plain}
\newtheorem{theorem}{Theorem}[section]

\newtheorem{proposition}{Proposition}[section]
\newtheorem{corollary}{Corollary}[section]
 
\theoremstyle{definition}

\newtheorem{example}{Example}[section]

\theoremstyle{remark}
\newtheorem{remark}{Remark}[section]

\title{Invariant CR Mappings}

\author{John P. D'Angelo}

\address{Dept. of Mathematics, Univ. of Illinois, 1409 W. Green St., Urbana IL 61801}

\email{jpda@math.uiuc.edu}

\begin{document}

\maketitle

\medskip
{\it Dedicated to Linda Rothschild}
\medskip

\section{Introduction}

The subject of CR Geometry interacts with nearly all of mathematics.
See  [BER] for an extensive discussion of many aspects of CR manifolds and mappings between them.
One aspect of the subject not covered in [BER] concerns CR mappings invariant under groups.
The purpose of this paper is to discuss interactions with number theory and combinatorics
that arise from the seemingly simple setting of group-invariant CR mappings from the unit sphere
to a hyperquadric. Some elementary representation theory also arises.

The unit sphere $S^{2n-1}$ in complex Euclidean space ${\bf C}^n$ is the basic example of a CR manifold of hypersurface type.
More generally we consider the hyperquadric $Q(a,b)$ defined to be the subset of ${\bf C}^{a+b}$ defined by

$$ \sum_{j=1}^a |z_j|^2 - \sum_{j=a+1}^{a+b} |z_j|^2 = 1. \eqno (1) $$

Of course $S^{2n-1}$ is invariant under the unitary group $U(n)$. Let $\Gamma$ be a finite subgroup
of $U(n)$. Assume that $f:{\bf C}^n \to {\bf C}^N$ is a rational mapping invariant under $\Gamma$ and that $f(S^{2n-1}) \subset S^{2N-1}$.
For most $\Gamma$ such an $f$ must be a constant. In other words, for most $\Gamma$,
there is no non-constant $\Gamma$-invariant rational mapping from sphere to sphere for any target dimension.
In fact, for such a non-constant invariant map to exist, $\Gamma$ must be cyclic and represented in a rather restricted fashion.
See [Li1], [DL], and especially Corollary 7 on page 187 of [D1], for precise statements and the considerable details required.

The restriction to rational mappings is natural; for $n\ge 2$ Forstneric [F1] proved that a proper mapping between balls,
with sufficiently many continuous derivatives at the boundary, must be a rational mapping. On the other hand, if one makes no regularity
assumption at all on the map, then (see [Li2]) one can create group-invariant proper mappings between balls for any fixed-point free
finite unitary group.  The restrictions on the group arise from CR Geometry and the smoothness of the CR mappings considered.
In this paper we naturally restrict our considerations to the class of rational mappings. See [F2] for considerable discussion 
about proper holomorphic mappings and CR Geometry.

In order to find group-invariant CR mappings from a sphere,
we relax the assumption that the target manifold be a sphere, and instead allow it to be a hyperquadric.
We can then always find polynomial examples, as we note in Corollary 1.1.
In this paper we give many examples of invariant mappings from spheres to hyperquadrics. Our techniques
allow us to give some explicit surprising examples. In Theorem 6.1 for example, we show 
that {\it rigidity} fails for mappings between hyperquadrics;
we find non-linear polynomial mappings between hyperquadrics with the same number of negative eigenvalues
in the defining equations of the domain and target hyperquadrics. As in the well-known case of maps between spheres,
we must allow sufficiently many positive eigenvalues in the target for such maps to exist.

To get started we recall that a polynomial $R:{\bf C}^n \times {\bf C}^n \to {\bf C}$ is called Hermitian symmetric 
if $R(z,{\overline w}) = {\overline {R(w, {\overline z})}}$
for all $z$ and $w$. If $R$ is Hermitian symmetric, 
then $R(z, {\overline z})$ is evidently real-valued. By polarization, the converse also holds. We note also
that a polynomial in $z=(z_1,...,z_n)$ and ${\overline z}$ is Hermitian symmetric if and only if its matrix of coefficients
is Hermitian symmetric in the sense of linear algebra.

The following result from [D1] shows how to construct group-invariant mappings from spheres
to hyperquadrics. Throughout the paper we will give explicit formulas in many cases.

\begin{theorem} Let $\Gamma$ be a finite subgroup of $U(n)$ of order $p$. Then there is a unique Hermitian symmetric
$\Gamma$-invariant polynomial $\Phi_\Gamma(z, {\overline w})$ such that the following hold:

1) $\Phi_\Gamma(0,0) = 0$. 

2) The degree of $\Phi_\Gamma$ in $z$ is $p$.

3) $\Phi_\Gamma(z, {\overline z})= 1$ when $z$ is on the unit sphere.

4) $\Phi_\Gamma(\gamma z, {\overline w}) = \Phi_\Gamma(z, {\overline w})$ for all $\gamma \in \Gamma$. \end{theorem}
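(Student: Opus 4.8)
The plan is to prove existence by writing down an explicit candidate and then to prove uniqueness by a divisibility argument in the polynomial ring ${\bf C}[z,\overline w]$. For existence, set $\langle z, w\rangle = \sum_{j=1}^n z_j \overline{w_j}$ and define
$$ \Phi_\Gamma(z,\overline w) = 1 - \prod_{\gamma \in \Gamma}\bigl(1 - \langle \gamma z, w\rangle\bigr). $$
Each of the four properties should then fall out of the structure of this product. Property 1 is immediate, since every factor equals $1$ at the origin. Property 2 holds because each factor is affine of degree one in $z$, and the top-degree-in-$z$ part $(-1)^p \prod_\gamma \langle \gamma z, w\rangle$ is a product of nonzero linear forms, hence nonzero in the integral domain ${\bf C}[z,\overline w]$; thus the degree in $z$ is exactly $p = |\Gamma|$. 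Property 4 I would obtain by reindexing: replacing $z$ by $\delta z$ merely permutes the factors, because $\gamma \mapsto \gamma\delta$ is a bijection of $\Gamma$. Hermitian symmetry follows similarly, using that each $\gamma$ is unitary, so $\langle z, \gamma w\rangle = \langle \gamma^{-1}z, w\rangle$, and that $\gamma \mapsto \gamma^{-1}$ is a bijection of $\Gamma$; conjugating the product and reindexing returns the same product.

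The one genuinely clever point in existence is property 3. Because the identity $e \in \Gamma$ contributes the factor $1 - \langle z, z\rangle = 1 - \lvert z\rvert^2$, which vanishes on $S^{2n-1}$, the entire product vanishes there, giving $\Phi_\Gamma(z,\overline z) = 1$ on the sphere. The substance of the theorem, however, is uniqueness, and here is the plan. Suppose $\Phi$ satisfies 1--4, and set $P = 1-\Phi$, a Hermitian symmetric, $\Gamma$-invariant polynomial with $\deg_z P = p$, with $P(0,0)=1$, and with $P(z,\overline z)=0$ on $S^{2n-1}$. Since $P$ vanishes on the sphere, the factor $1 - \langle z, w\rangle$ divides $P$ in ${\bf C}[z,\overline w]$: this is the standard fact that a Hermitian polynomial vanishing on the sphere is divisible by $\lvert z\rvert^2 - 1$, applied after polarization. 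Write $P = (1-\langle z,w\rangle)\,G$. Substituting $\delta z$ for $z$ and invoking $P(\delta z, \overline w) = P(z,\overline w)$ then shows that $1 - \langle \delta z, w\rangle$ divides $P$ for every $\delta \in \Gamma$.

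To finish, I would note that distinct group elements give distinct linear forms $\langle \delta z, w\rangle$, since the coefficient of $z_k\overline{w_j}$ recovers the matrix entry $\delta_{jk}$; hence these $p$ factors are pairwise non-associate irreducibles. As ${\bf C}[z,\overline w]$ is a UFD, their product $\Pi = \prod_{\delta \in \Gamma}(1 - \langle \delta z, w\rangle)$ divides $P$, so $P = Q\,\Pi$. Comparing degrees in $z$ (both $P$ and $\Pi$ have degree exactly $p$, by property 2 and the existence computation) forces $\deg_z Q = 0$; since $\Pi$ is Hermitian symmetric by the same reindexing as above, $Q$ is Hermitian symmetric and of degree $0$ in both $z$ and $\overline w$, hence a real constant. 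Evaluating at the origin, where $P(0,0) = \Pi(0,0) = 1$, pins $Q = 1$. Therefore $P = \Pi$, that is, $\Phi = \Phi_\Gamma$.

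I expect the main obstacle to be the divisibility lemma --- that vanishing on the sphere forces the factor $1 - \langle z, w\rangle$ --- since it is the one step that leaves pure algebra and uses the geometry of the sphere, via the generation of the sphere's real ideal by $\lvert z\rvert^2 - 1$ together with polarization to pass from $R(z,\overline z)$ to $R(z,\overline w)$. The remaining difficulty is bookkeeping: verifying that the $p$ factors are genuinely non-associate and tracking the degree in $z$ carefully, so that the UFD divisibility closes with an equality rather than leaving $\Pi$ a proper divisor of $P$.
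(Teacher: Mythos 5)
Your proposal is correct. On existence it coincides with the paper: the paper's own proof of Theorem 1.1 consists of exhibiting the very same product formula (3), remarking that the first three properties are evident from it and that the fourth is not hard to check. Your verifications --- the identity element's factor $1-\langle z,z\rangle$ forcing property 3), reindexing by right translation $\gamma\mapsto\gamma\delta$ for invariance, reindexing by $\gamma\mapsto\gamma^{-1}$ together with unitarity for Hermitian symmetry, and the integral-domain argument that the degree in $z$ is exactly $p$ --- are precisely the routine checks the paper omits. Where you genuinely diverge is uniqueness: the paper says only that uniqueness also needs to be verified and defers to [D1], whereas you supply a complete argument, and it is sound. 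The divisibility lemma (a Hermitian symmetric polynomial vanishing on $S^{2n-1}$ is divisible by $1-\langle z,w\rangle$, obtained from the fact that $1-\|z\|^2$ generates the ideal of the sphere, followed by polarization), the transfer of divisibility to every factor $1-\langle \delta z,w\rangle$ via property 4), the UFD step, the degree count forcing the cofactor to have degree zero in $z$, and the Hermitian symmetry of the cofactor pinning it to the real constant $1$ all hold. The only micro-steps you assert rather than prove --- irreducibility of each $1-\langle \delta z,w\rangle$ and non-associateness for distinct $\delta$ --- are easily supplied: if $1-B$ factored, with $B$ a nonzero bilinear form in $(z,\overline w)$, both factors would be affine-linear and matching the pure $z$-linear and $\overline w$-linear terms would force those parts to vanish, killing the mixed term; and two associate factors must have equal constant terms, hence be equal, hence come from the same matrix. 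The net comparison: your route makes the theorem self-contained at the cost of invoking the real-algebraic input about the sphere's ideal, which the paper sidesteps entirely by citation to [D1].
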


\begin{corollary} There are holomorphic vector-valued $\Gamma$-invariant polynomial mappings $F$ and $G$
such that we can write

$$ \Phi_\Gamma(z, {\overline z})= ||F(z)||^2 - ||G(z)||^2. \eqno (2) $$ 
The polynomial mapping $z \to (F(z), G(z))$ restricts to a $\Gamma$-invariant mapping from $S^{2n-1}$
to the hyperquadric $Q(N_+, N_-)$, where these integers are the numbers of positive and negative eigenvalues
of the matrix of coefficients of $\Phi_\Gamma$. \end{corollary}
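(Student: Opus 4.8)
The plan is to decompose the Hermitian symmetric polynomial $\Phi_\Gamma$ via its matrix of coefficients. Since Theorem 1.1 produces a Hermitian symmetric $\Gamma$-invariant polynomial with $\Phi_\Gamma(0,0)=0$, I can write $\Phi_\Gamma(z,\overline w)=\sum_{\alpha,\beta} c_{\alpha\beta}\, z^\alpha \overline{w}^\beta$, where the coefficient matrix $(c_{\alpha\beta})$ is Hermitian symmetric in the linear-algebra sense (as noted in the paragraph preceding Theorem 1.1). The indices $\alpha,\beta$ range over the monomials actually appearing; collect these into a vector $Z(z)$ of monomials, so that $\Phi_\Gamma(z,\overline z)=\langle C\, Z(z), Z(z)\rangle$ for the Hermitian matrix $C=(c_{\alpha\beta})$.

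The key step is to diagonalize $C$ by the spectral theorem. Writing $C=U^* D U$ with $U$ unitary and $D$ real diagonal, I separate the eigenvalues by sign: let $\lambda_1,\dots,\lambda_{N_+}$ be the positive eigenvalues and $-\mu_1,\dots,-\mu_{N_-}$ the negative ones (the zero eigenvalues contribute nothing). Defining
\[
 F(z)=\bigl(\sqrt{\lambda_1}\,(UZ(z))_1,\dots,\sqrt{\lambda_{N_+}}\,(UZ(z))_{N_+}\bigr),\qquad
 G(z)=\bigl(\sqrt{\mu_1}\,(UZ(z))_{N_++1},\dots\bigr),
\]
I obtain holomorphic polynomial mappings with $\Phi_\Gamma(z,\overline z)=\|F(z)\|^2-\|G(z)\|^2$, which is exactly (2). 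The components of $F$ and $G$ are linear combinations of the entries of $Z(z)$, hence polynomials.

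Next I must verify that $F$ and $G$ can be taken $\Gamma$-invariant. Here I would use property (4), which says $\Phi_\Gamma(\gamma z,\overline w)=\Phi_\Gamma(z,\overline w)$ for all $\gamma\in\Gamma$. By averaging $Z(z)$ over $\Gamma$, or equivalently by restricting attention to the $\Gamma$-invariant monomials, the vector of monomials that genuinely enter $\Phi_\Gamma$ is already $\Gamma$-invariant, so $F$ and $G$ inherit invariance. Finally, restricting to $z\in S^{2n-1}$ and invoking property (3), I get $\|F(z)\|^2-\|G(z)\|^2=\Phi_\Gamma(z,\overline z)=1$, which is precisely the defining equation (1) of the hyperquadric $Q(N_+,N_-)$ with $N_+$ positive and $N_-$ negative eigenvalues. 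Thus $z\mapsto(F(z),G(z))$ maps $S^{2n-1}$ into $Q(N_+,N_-)$.

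I expect the main obstacle to be the bookkeeping around $\Gamma$-invariance of the individual components rather than of $\Phi_\Gamma$ itself: the diagonalization is canonical only up to the choice of eigenbasis, so I must argue that the eigenspaces of $C$ are $\Gamma$-stable (which follows because $\Gamma$ acts on the monomial vector $Z(z)$ and commutes with $C$ by property (4)), ensuring that the eigenbasis—and hence $F$ and $G$—can be chosen compatibly with the group action. The remaining verifications (polynomiality, the signature count, and the boundary identity) are then routine consequences of the spectral decomposition together with properties (3) and (4).
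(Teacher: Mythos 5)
Your spectral-theorem decomposition of the Hermitian coefficient matrix is exactly the mechanism the paper intends (the corollary is quoted from [D1], and the paper's Example 3.3 --- ``after diagonalization this information determines a (holomorphic) polynomial mapping $(F,G)$'' --- confirms this), and your treatment of equation (2), the signature count $(N_+,N_-)$, and the restriction to the sphere via property (3) is correct.

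The step that does not survive scrutiny is your primary justification of the $\Gamma$-invariance of $F$ and $G$. It is false in general that the monomials genuinely entering $\Phi_\Gamma$ are themselves $\Gamma$-invariant: that happens for diagonal representations, but in the paper's Example 3.3, where the generator acts by $A(z_1,z_2)=(z_2,\eta z_1)$, the invariant polynomial contains the monomials $z_1^3$ and $z_2^3$, neither of which is invariant --- only the combination $z_1^3+z_2^3$ is. Averaging the monomial vector over $\Gamma$ would likewise alter the form unless more is argued. Your fallback claim, that the induced action of $\Gamma$ \emph{commutes} with the coefficient matrix $C$, is also not the relation that property (4) actually provides. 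Writing $Z(\gamma z)=M_\gamma Z(z)$ for the induced linear action on the vector of monomials, property (4) reads $CM_\gamma=C$ in matrix form, and Hermitian symmetry of $C$ then gives $M_\gamma^*C=C$ as well. This is better than stability of eigenspaces: if $Cv=\lambda v$ with $\lambda\neq 0$, then
$$ M_\gamma^* v = \tfrac{1}{\lambda}\, M_\gamma^* C v = \tfrac{1}{\lambda}\, C v = v, $$
so every eigenvector of $C$ with nonzero eigenvalue is \emph{fixed} by $M_\gamma^*$, not merely permuted within its eigenspace. Consequently each component $F_i(z)=\sqrt{\lambda_i}\, v_i^* Z(z)$ satisfies $F_i(\gamma z)=\sqrt{\lambda_i}\, v_i^* M_\gamma Z(z)=\sqrt{\lambda_i}\,(M_\gamma^* v_i)^* Z(z)=F_i(z)$, and likewise for the components of $G$. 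With this substitution in place of your invariance argument, the proof is complete and coincides with the paper's.
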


The results in this paper revolve around how the mapping $(F,G)$ from Corollary 1.1 depends on $\Gamma$.
We clarify one point at the start; even if we restrict our considerations to cyclic groups, then this mapping changes
(surprisingly much) as the representation of the group changes. The interesting things from the points of view of CR Geometry,
Number Theory, and Combinatorics all depend in non-trivial ways on the particular representation.
Therefore the results should be considered as statements about the
particular subgroup $\Gamma \subset U(n)$, rather than as statements about the abstract group $G$ for which
$\pi(G) = \Gamma$.

The proof of Theorem 1 leads to the following formula for $\Phi_\Gamma$.

$$ \Phi_\Gamma(z,{\overline w}) = 1 - \prod_{\gamma \in \Gamma}(1 - \langle \gamma z, w \rangle). \eqno (3) $$
The first three properties from Theorem 1 are evident from (3), and the fourth property is not hard to check.
One also needs to verify uniqueness.

The starting point for this paper will therefore be formula (3). 
We will first consider three different representations of cyclic groups and we note the considerable
differences in the corresponding invariant polynomials. We also consider metacyclic groups.
We also discuss some interesting asymptotic considerations, as the order of the group tends to infinity. Additional
asymptotic results are expected in appear in the doctoral thesis [G] of D. Grundmeier.

An interesting result in this paper is the application in Section 6. In Theorem 6.1
we construct, for each odd $2p+1$ with $p \ge 1$, a  polynomial mapping $g_p$ of degree $2p$ such that
$$ g_p: Q(2,2p+1) \to Q(N(p), 2p+1). \eqno (4) $$
These mappings illustrate a failure of {\it rigidity}; in many contexts restrictions on the eigenvalues of the domain
and defining hyperquadrics force maps to be linear. See [BH]. Our new examples show that rigidity
does not hold when we keep the number of negative eigenvalues the same, as long as we allow a sufficient increase
in the number of positive eigenvalues. On the other hand, by a result in [BH], the additional restriction
that the mapping preserves sides of the hyperquadric does then guarantee rigidity. 
It is quite striking that the construction of the polynomials in Theorem 6.1 relies on
the group-theoretic methods in the rest of the paper. 
 
The author acknowledges support from NSF grant DMS-07-53978. He thanks both Dusty Grundmeier and Jiri Lebl
for many discussions on these matters. He also acknowledges the referee who spotted an error in the original presentation
of Example 3.3.

\section{Properties of the invariant polynomials}

Let $\Gamma$ be a finite subgroup of the unitary group $U(n)$, and let $\Phi_\Gamma$ be
the unique polynomial defined by (3). Our primary interest concerns how this polynomial
depends on the particular representation of the group.

We remark at the outset that we will be considering {\it reducible}
representations. A simple example clarifies why. If $G$ is cyclic of order $p$, then $G$ has the irreducible unitary (one-dimensional)
representation $\Gamma$ as the group of $p$-th roots of unity. An elementary calculation shows
that the invariant polynomial $\Phi_\Gamma$ becomes simply 
$$ \Phi_\Gamma(z,{\overline w}) = (z{\overline w})^p. \eqno (5) $$
On the other hand, there are many ways to represent $G$ as a subgroup of $U(n)$ for $n\ge 2$. We will consider these below;
for now we mention one beautiful special case.

Let $p$ and $q$ be positive integers with $1\le q\le p-1$ and let $\omega$ be a primitive $p$-th
root of unity. Let $\Gamma(p,q)$ be the cyclic group generated by the diagonal $2$-by-$2$ matrix $A$ with eigenvalues
$\omega$ and $\omega^q$:

$$ 
A= \begin{pmatrix}
\omega     & 0 \\
0 & \omega^q
\end{pmatrix}.
 \eqno (6) $$
Because $A$  is diagonal, the invariant polynomial $\Phi_{\Gamma(p,q)}(z, {\overline z})$ depends on only $|z_1|^2$ and $|z_2|^2$.
If we write $x=|z_1|^2$ and $y=|z_2|^2$, then we obtain a corresponding polynomial $f_{p,q}$ in $x$ and $y$.
This polynomial has integer coefficients; a combinatorial interpretation of these coefficients appears in [LWW].
The crucial idea in [LWW] is the interpretation of $\Phi_\Gamma$ as a circulant determinant; hence permutations
arise and careful study of their cycle structure leads to the combinatorial result.
Asymptotic information about these integers as $p$ tends to infinity appears in both [LWW] and [D4]; the technique
in [D4] gives an analogue of the Szeg\"o limit theorem.  In the special case where $q=2$, these polynomials provide 
examples of sharp degree estimates for proper monomial mappings between balls. The polynomials $f_{p,2}$
have many additional beautiful properties. We pause to write down the formula and state an appealing corollary.
These polynomials will arise in the proof of Theorem 6.1.

$$  f_{p,2}(x,y) = (-1)^{p+1} y^p + \left( {x + \sqrt{x^2 + 4y} \over 2}\right)^p + \left( {x - \sqrt{x^2 + 4y} \over 2}\right)^p. \eqno (7) $$ 

\begin{corollary} [D4] Let $S_p$ be the sum of the coefficients of $f_{p,2}$. Then the limit as $p$ tends to infinity
of $S_p^{1 \over p}$ equals the golden ratio ${1 + \sqrt{5} \over 2}$. \end{corollary}

\begin{proof} The sum of the coefficients is $f_{p,2}(1,1)$, so put $x=y=1$ in (7).
The largest (in absolute value) of the three terms 
is the middle term. Taking $p$-th roots and letting $p$ tend to infinity gives the result. \end{proof}

See [DKR] for degree estimates and [DLe] for number-theoretic information concerning uniqueness
results for degree estimates. The following elegant primality test was proved in [D2].

\begin{theorem}  For each $q$, the congruence $f_{p,q}(x,y) \cong x^p + y^p$ mod $(p)$ holds if and only if $p$ is prime.     \end{theorem}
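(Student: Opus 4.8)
My plan is to convert the congruence into a statement about a circulant determinant and then study that determinant modulo $p$. Restricting the product formula (3) to the diagonal $w=z$ and writing $x=|z_1|^2$, $y=|z_2|^2$, the matrix $A^k$ has eigenvalues $\omega^k$ and $\omega^{qk}$, so $\langle A^k z, z\rangle = \omega^k x + \omega^{qk} y$ and hence
$$ f_{p,q}(x,y) = 1 - \prod_{k=0}^{p-1}\left(1 - \omega^k x - \omega^{qk} y\right). $$
Since the $p\times p$ cyclic permutation matrix $P$ has eigenvalues exactly the $p$-th roots of unity $\omega^k$, and is simultaneously diagonalizable with all of its powers, this product is a determinant:
$$ 1 - f_{p,q}(x,y) = \det\left(I - xP - yP^q\right). $$
This is the circulant-determinant interpretation used in [LWW]. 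The whole argument now takes place over $\mathbf{Z}[x,y]$, and the goal is to compute this determinant modulo $p$.

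Suppose first that $p$ is prime. Over $\mathbf{F}_p$ the characteristic polynomial of $P$ is $\lambda^p - 1 = (\lambda - 1)^p$, so $1$ is the only eigenvalue; being a cyclic (companion) matrix, $P$ is non-derogatory and is therefore conjugate over $\mathbf{F}_p$ to a single Jordan block $I + N$ with $N$ strictly upper triangular. Conjugating the full matrix and expanding $(I+N)^q$ by the binomial theorem, $I - xP - yP^q$ becomes upper triangular with every diagonal entry equal to $1 - x - y$, so its determinant is $(1-x-y)^p$. Two applications of the Frobenius identity give $(1-x-y)^p \equiv 1 - x^p - y^p \pmod p$, and therefore $f_{p,q} \equiv x^p + y^p$. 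I emphasize that primality enters in exactly one place: the identity $\lambda^p - 1 = (\lambda - 1)^p$ over $\mathbf{F}_p$.

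For the converse I must show that the congruence fails whenever $p$ is composite, and the structural reason is precisely that $\lambda^p - 1 \neq (\lambda - 1)^p$ in $\mathbf{F}_p[\lambda]$, so $P$ is no longer a single Jordan block and the triangularization collapses. To make this quantitative I would exhibit one monomial $x^a y^b$, with $(a,b)$ not a corner, whose coefficient is nonzero mod $p$. Two families of cases are immediate: if $g=\gcd(q,p)>1$ then setting $x=0$ gives $f_{p,q}(0,y) = 1 - (1 - y^{p/g})^g$, whose $y^{p/g}$ coefficient is $g\not\equiv 0$; and, up to sign, the top-degree part of $f_{p,q}$ equals $(x^{p/h} + y^{p/h})^h$ with $h=\gcd(q-1,p)$, which carries a cross term $\pm h\, x^{p/h}y^{p-p/h}$ with $h\not\equiv 0$ whenever $h>1$ (the case $q=1$ recovers the classical statement that $(x+y)^p \equiv x^p+y^p$ characterizes primes). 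The genuinely hard case is $p$ composite with $\gcd(q,p)=\gcd(q-1,p)=1$; for instance $f_{9,2} \equiv x^9 + y^9 + 3x^3y^3 \pmod 9$, where a term survives only in the intermediate degree $6$. Here I would use the permutation expansion of the circulant determinant, in which the coefficient of $x^a y^b$ is a signed count of permutations of $\mathbf{Z}/p$ all of whose displacements $\pi(i)-i$ lie in $\{0,1,q\}$. Conjugating by the rotation $i\mapsto i+1$ preserves both the sign and the displacement multiset, so these permutations fall into orbits; the free orbits have size $p$ and contribute multiples of $p$, and the only permutations fixed by the full rotation are the shifts by $0$, $1$, or $q$, which give exactly the three corner terms. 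I expect the main obstacle to be the remaining bookkeeping: for composite $p$ there are additional permutations of nontrivial period, one for each proper divisor of $p$, and one must show that for every composite $p$ at least one such periodic family contributes a coefficient that does not vanish modulo $p$, forcing a surviving intermediate term and hence the failure of the congruence.
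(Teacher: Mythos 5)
Two preliminary remarks on the comparison itself: the paper does not prove this theorem --- it states it and cites [D2], adding only that ``the basic theory enables us to reduce the congruence question to the special case'' --- so your argument has to stand entirely on its own. Half of it does. The determinant identity $1 - f_{p,q}(x,y) = \det(I - xP - yP^q)$ is correct and is precisely the circulant interpretation the paper credits to [LWW], and your forward implication is complete and correct: for prime $p$, the matrix $P$ is the companion matrix of $\lambda^p - 1 = (\lambda-1)^p$ over $\mathbf{F}_p$, hence non-derogatory, hence similar over $\mathbf{F}_p$ to a single unipotent Jordan block $I+N$, so the conjugated matrix $I - x(I+N) - y(I+N)^q$ is triangular with diagonal entries $1-x-y$ and the determinant collapses to $(1-x-y)^p \equiv 1 - x^p - y^p \pmod{p}$. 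The two easy families of the converse are also correct: if $g = \gcd(q,p) > 1$ then $f_{p,q}(0,y) = 1 - (1-y^{p/g})^g$ contains the term $g\,y^{p/g}$ with $0 < g < p$, and if $h = \gcd(q-1,p)$ satisfies $1 < h < p$ (i.e. $q \neq 1$) the top-degree cross term $\pm h\, x^{p/h} y^{p - p/h}$ survives mod $p$; the excluded case $q=1$ is the classical one.

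The genuine gap is exactly the case you flag and postpone: $p$ composite with $\gcd(q,p) = \gcd(q-1,p) = 1$, for instance $(p,q) = (9,2)$. For these $p$ the converse is not proved; it is a plan. Your orbit framework is sound --- conjugation by the rotation preserves sign and displacement multiset, free orbits contribute multiples of $p$, and the only rotation-invariant permutations are the three shifts, which produce the corner terms (for prime $p$ this even re-derives the forward direction combinatorially). But the theorem now hinges on the unproved assertion that, for every composite $p$ and every admissible $q$, the permutations of nontrivial period $d$ with $d \mid p$ and $1 < d < p$ make a net contribution to some non-corner coefficient that is nonzero modulo $p$. That is not bookkeeping; it is the combinatorial heart of the problem, and your own example shows why it is delicate: in $f_{9,2}$ every intermediate coefficient except one dies mod $9$, and the survivor is $30 \equiv 3$, i.e. the periodic contributions are divisible only by proper divisors of $p$ and could, a priori, cancel to a multiple of $p$; ruling that out uniformly in $p$ and $q$ is precisely the cycle-structure analysis carried out in [LWW] and [D2] (and is what the paper's remark about reducing the congruence question to cases with explicit formulas, such as $q=2$ and $q=p-1$, is alluding to). So as written you have established: prime $\Rightarrow$ congruence, and failure of the congruence for composite $p$ sharing a factor with $q$ or with $q-1$; the hardest family of composites, and with it the ``only if'' direction, remains open.
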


We make a few comments. When $q=1$, the polynomial $f_{p,1}$ is simply $(x+y)^p$ and the result is well-known.
For other values of $q$ the polynomials are more complicated. When $q=2$ or when $q=p-1$ there are explicit formulas
for the integer coefficients. For small $q$ recurrences exist but the order of the recurrences grows exponentially with $q$. 
See [D2], [D3], [D4] and [G].
There is no known general formula for the integer coefficients.  Nonetheless the basic theory enables us to reduce
the congruence question to the special case. 
Note also that the quotient space $L(p,q)=S^{3}/ \Gamma$ is a Lens space. It might be interesting to relate
the polynomials $f_{p,q}$ to the differential topology of these spaces.

We return to the general situation and
repeat the crucial point; the invariant polynomials depend on the representation in non-trivial and interesting ways,
even in the cyclic case. In order to express them we recall ideas that go back to E. Noether. See [S] for considerable discussion.
Given a subgroup $\Gamma$ of the general linear group, Noether proved that the algebra of polynomials invariant under $\Gamma$
is generated by polynomials of degree at most the order $|\Gamma|$ of $\Gamma$. Given a polynomial $p$ we can create
an invariant polynomial by averaging $p$ over the group: 
$$ {1 \over |\Gamma|} \sum_{\gamma \in \Gamma} p \circ \gamma. \eqno (8) $$
We find a basis for the algebra of invariant polynomials as follows. We average each monomial $z^\alpha$
of total degree at most $|\Gamma|$ as in (8) to obtain an invariant polynomial; often the result will be the zero polynomial.
The nonzero polynomials that result generate the algebra of polynomials invariant under $\Gamma$. In particular, the number
of polynomials required is bounded above by the dimension of the space of homogeneous polynomials of degree $|\Gamma|$
in $n$ variables. Finally we can express the $F$ and $G$ from (2) in terms of sums and products of these basis elements.
The invariant polynomials here are closely related to the Chern orbit polynomials from [S]. The possibility of polarization
makes our approach a bit different. It seems a worthwhile project to deepen this connection. Some results in this direction
will appear in [G].

\section{Cyclic groups}
Let $\Gamma$ be cyclic of order $p$. Then the elements of $\Gamma$ are $I,A,A^2,...,A^{p-1}$ for some unitary
matrix $A$. Formula (3) becomes

$$ \Phi_\Gamma(z,{\overline w}) = 1 - \prod_{j=0}^{p-1} (1 - \langle A^j z, w \rangle). \eqno (9) $$
We begin by considering three different representations of
a cyclic group of order six; we give precise formulas for the corresponding invariant polynomials.

Let $\omega$ be a primitive sixth-root of unity, and let $\eta$ be a primitive third-root of unity.
We consider three different unitary matrices; each generates a cyclic group of order six.

\begin{example} Let $\Gamma$ be the cyclic group of order $6$ generated by $A$, where
$$ 
A= \begin{pmatrix}
\omega     & 0 \\
0 & {\omega}
\end{pmatrix}.
 \eqno (10.1) $$
The invariant polynomial satisfies the following formula:
$$ \Phi_\Gamma(z, {\overline z}) = (|z_1|^2 + |z_2|^2)^6. \eqno (10.2) $$
It follows that $\Phi$ is the squared norm of the following holomorphic polynomial:

$$ f(z) = (z_1^6, \sqrt{6} z_1^5 z_2, \sqrt{15} z_1^4  z_2^2, \sqrt{20} z_1^3 z_2^3, \sqrt{15} z_1^2 z_2^4, \sqrt{6} z_1 z_2^5, z_2^6). \eqno (10.3) $$

The polynomial $f$ restricts to the sphere to define an invariant CR mapping from $S^3$ to $S^{13} \subset {\bf C}^7$. \end{example}

\begin{example} Let $\Gamma$ be the cyclic group of order $6$ generated by $A$, where
$$ 
A= \begin{pmatrix}
\omega     & 0 \\
0 & {\overline \omega}
\end{pmatrix}.
 \eqno (11.1) $$
The invariant polynomial satisfies the following formula:
$$ \Phi_\Gamma(z, {\overline z}) = |z_1|^{12} + |z_2|^{12} + 6 |z_1|^2 |z_2|^2 + 2 |z_1|^6 |z_2|^6 - 9 |z_1|^4 |z_2|^4.  \eqno (11.2) $$
Note that $\Phi$ is not a squared norm. Nonetheless we define $f$ as follows:

$$ f(z) = (z_1^6, z_2^6, \sqrt{6} z_1 z_2, \sqrt{2} z_1^3 z_2^3, 3 z_1^2 z_2^2). \eqno (11.3) $$
Then
$$ \Phi = |f_1|^2 + |f_2|^2 + |f_3|^2 +|f_4|^2 - |f_5|^2, $$
and the polynomial $f$ restricts to the sphere to define an invariant CR mapping from $S^3$ to $Q(4,1) \subset {\bf C}^5$. \end{example}

\begin{example} Let $\Gamma$ be the cyclic group of order $6$ generated by $A$, where
$$ 
A= \begin{pmatrix}
0 & 1 \\
\eta & 0 
\end{pmatrix}.
 \eqno (12.1) $$

The polynomial $\Phi$ can be expressed as follows:

$$ \Phi(z,{\overline z}) = (x+y)^3 + (\eta s + {\overline \eta} t)^3 - (x+y)^3 (\eta s + {\overline \eta} t)^3 \eqno (12.2)$$
where

$$ x = |z_1|^2$$
$$ y = |z_2|^2 $$
$$ s = z_2 {\overline z_1} $$
$$ t = z_1 {\overline z_2}.  $$

After diagonalization this information determines a (holomorphic) polynomial mapping $(F,G)$ such that
$$\Phi_\Gamma = ||F||^2 - ||G||^2. $$
It is somewhat complicated to determine the components of $F$ and $G$.
It is natural to use Noether's approach.
For this particular representation, considerable computation then yields the following invariant polynomials:

$$ z_1^3 + z_2^3 = p $$
$$ z_1^2 z_2 + \eta z_1 z_2^2 = q $$
$$  z_1^6 + z_2^6 = f $$
$$  z_1^3 z_2^3 = {1 \over 2} (p^2 - f)$$
$$  z_1 z_2^5 + \eta^2 z_1^5 z_2$$
$$ {1 \over 2} (z_1^4 z_2^2 + \eta^2 z_1^2 z_2^4) = h $$

In order to write $\Phi_\Gamma$ nicely, we let
$$ g = c (z_1 z_2^5 + 3 \eta z_1^3 z_2^3 + \eta^2 z_1^5 z_2). $$
Then one can write $\Phi_\Gamma$, for some $C>0$ as follows:

$$ \phi_\Gamma = |p|^2 + |q|^2 + {1 \over 2} (|f - z_1^3 z_2^3|^2 - |f + z_1^3 z_2^3|^2) + C (|g -h|^2 - |g + h|^2). \eqno (12.3)$$
We conclude that the invariant polynomial determines an invariant CR mapping $(F,G)$
from the unit sphere $S^3$ to the hyperquadric $Q(4,2) \subset {\bf C}^6$.
We have

$$ F = \left(p,q,{1 \over \sqrt{2}} (f- z_1^3 z_2^3), \sqrt{C} (g-h)\right) \eqno (12.4) $$
$$ G = \left( {1 \over \sqrt{2}} (f+ z_1^3 z_2^3), \sqrt{C} (g+h) \right). \eqno (12.5) $$
\end{example}

Consider these three examples together. In each case we have a cyclic group of order six, represented as a subgroup of $U(2)$.
In each case we found an invariant CR mapping. The image hyperquadrics were $Q(7,0)$, $Q(4,1)$, and $Q(4,2)$. The corresponding 
invariant mappings had little in common. In the first case, the map was homogeneous; in the second case the map was not homogeneous, 
although it was a monomial mapping. In the third case we obtained a rather complicated non-monomial map.
It should be evident from these examples that the mappings depend in non-trivial ways on the representation.

\section{Asymptotic Information}
In this section we consider three families of cyclic groups,
$\Gamma(p,1)$, $\Gamma(p,2)$, and $\Gamma(p,p-1)$. For these groups it is possible to compute
the invariant polynomials $\Phi_\Gamma$ exactly. In each case, because the group is generated by a diagonal matrix,
the invariant polynomial depends on only $x=|z_1|^2$ and $y=|z_2|^2$. We will therefore often write the polynomials
as functions of $x$ and $y$.

For $p=1$ we have
$$ \Phi_\Gamma(z,{\overline z}) = (|z_1|^2 + |z_2|^2)^p = (x+y)^p. \eqno (13) $$
It follows that there is an invariant CR mapping to a sphere, namely the hyperquadric $Q(p+1,0)$.
We pause to prove (13) by establishing the corresponding general result in any domain dimension.

\begin{theorem} Let $\Gamma$ be the cyclic group generated by $\omega I$, where
$I$ is the identity operator on ${\bf C}^n$ and $\omega$ is a primitive $p$-th root of unity.
Then $\Phi_\Gamma(z,{\overline z}) = ||z||^{2p}= ||z^{\otimes p}||^2$.
Thus $\Phi_{\Gamma(p,1)}^{1 \over p}= ||z||^2$ and hence it is independent of $p$. \end{theorem}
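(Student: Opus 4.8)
The plan is to compute $\Phi_\Gamma$ directly from formula (3), in the cyclic form (9). Since $\Gamma$ is generated by $A = \omega I$, its elements are exactly $A^j = \omega^j I$ for $j = 0,1,\dots,p-1$, so that $\langle A^j z, w \rangle = \omega^j \langle z, w \rangle$. Setting $\lambda = \langle z, w \rangle$, formula (9) reduces the entire problem to evaluating the single-variable product $\prod_{j=0}^{p-1}(1 - \omega^j \lambda)$.

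The key step is a root-of-unity factorization. Because $\omega$ is a primitive $p$-th root of unity, the numbers $\omega^0, \omega^1, \dots, \omega^{p-1}$ are precisely the $p$-th roots of unity, whence $\prod_{j=0}^{p-1}(t - \omega^j) = t^p - 1$. I would apply this to the polynomial $\prod_{j=0}^{p-1}(1 - \omega^j \lambda)$ viewed as a function of $\lambda$: it has degree $p$, its roots are the reciprocals $\omega^{-j}$, which again range over all $p$-th roots of unity, and its value at $\lambda = 0$ is $1$. Any degree-$p$ polynomial whose roots are exactly the $p$-th roots of unity has the form $c(\lambda^p - 1)$; evaluating at $\lambda = 0$ gives $-c = 1$, so $c = -1$ and $\prod_{j=0}^{p-1}(1 - \omega^j \lambda) = 1 - \lambda^p$. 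Substituting into (9) then yields $\Phi_\Gamma(z, \overline w) = 1 - (1 - \lambda^p) = \langle z, w \rangle^p$.

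Finally I would restrict to the diagonal $w = z$ to obtain $\Phi_\Gamma(z, \overline z) = \langle z, z \rangle^p = ||z||^{2p}$. To identify this with $||z^{\otimes p}||^2$, I would invoke multiplicativity of the Hermitian inner product under tensor powers, namely $\langle z^{\otimes p}, z^{\otimes p} \rangle = \langle z, z \rangle^p$, so that $||z||^{2p} = ||z^{\otimes p}||^2$. Taking $p$-th roots gives $\Phi_\Gamma^{1/p} = ||z||^2$, which is manifestly independent of $p$; specializing to $n = 2$ recovers $\Gamma(p,1)$ and formula (13).

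Since the argument is a short explicit computation starting from the given formula (3), I do not expect a serious obstacle. The only place that demands genuine care is the overall sign in the root-of-unity factorization: one must confirm that the constant term is $+1$ (forcing the normalizing constant to be $-1$) so that the product equals $1 - \lambda^p$ and not $\lambda^p - 1$. A single line verifying that the listed matrices $\omega^j I$ do exhaust $\Gamma$ makes the reduction to (9) fully self-contained.
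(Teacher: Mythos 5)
Your proof is correct, but it takes a genuinely different route from the paper. You compute $\Phi_\Gamma$ directly from formula (3): since every group element is $\omega^j I$, the product collapses to the one-variable identity $\prod_{j=0}^{p-1}(1-\omega^j\lambda) = 1-\lambda^p$ with $\lambda = \langle z,w\rangle$, and your sign bookkeeping (constant term $+1$ forces the normalization $c=-1$) is right. This yields the stronger \emph{polarized} statement $\Phi_\Gamma(z,\overline w) = \langle z,w\rangle^p$, of which the theorem is the diagonal restriction; it is exactly the $n$-dimensional generalization of the paper's formula (5), and it is completely self-contained. The paper instead argues by characterization rather than computation: the $\Gamma$-invariant polynomials are spanned by monomials of degree divisible by $p$, so Theorem 1.1 (degree $p$ in $z$, Hermitian symmetry) forces $\Phi_\Gamma$ to be bihomogeneous of total degree $2p$; a bihomogeneous polynomial of that degree taking the value $1$ on the unit sphere must equal $\|z\|^{2p}$. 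The paper's argument is shorter and illustrates how the uniqueness properties of Theorem 1.1 can identify $\Phi_\Gamma$ without expanding the product --- a technique that remains usable when the product is intractable --- while yours buys explicitness, avoids any appeal to uniqueness or to the structure of the invariant algebra, and delivers the off-diagonal formula $\langle z,w\rangle^p$ as a bonus.
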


\begin{proof} A basis for the invariant polynomials is given by the homogeneous monomials of degree $p$. By Theorem 1.1
$\Phi_\Gamma$ is of degree $p$ in $z$ and hence of degree $2p$ overall. It must then be homogeneous of total degree $2p$
and it must take the value $1$ on the unit sphere; it therefore equals $||z||^{2p}$.  \end{proof}

We return to the case where $n=2$ where $||z||^2 = |z_1|^2 + |z_2|^2 = x+y$.
In the more complicated situation
arising from $\Gamma(p,q)$, the expression $\Phi_{\Gamma(p,q)}^{1 \over p}$ is not constant,
but its behavior as $p$ tends to infinity is completely analyzed in [D4].

As an illustration we perform this calculation when $q=2$.  By expanding (3) the following formula holds (see [D4] for details and precise formulas
for the $n_j$):

$$  f_{p,2}(x,y) = \Phi_{\Gamma(p,2)} (z,{\overline z}) = x^p + (-1)^{p+1} y^p + \sum_j n_j x^{p-2j} y^{j}.  \eqno (14.1) $$
Here the $n_j$ are positive integers and the summation index $j$ satisfies $2j \le p$.
The target hyperquadric now depends on whether $p$ is even or odd. When $p=2r-1$ is odd,
the target hyperquadric is the sphere, namely the hyperquadric $Q(r+1,0)$.
When $p=2r$ is even, the target hyperquadric is $Q(r+1,1)$.
In any case, using (7) under the condition $x +\sqrt{x^2+4y} > 2y$, we obtain

$$  \left(f_{p,2}(x,y)\right)^{1 \over p} = {x + \sqrt{x^2 + 4y} \over 2} (1 + h_p(x,y))^{1 \over p}, \eqno (14.2) $$ 
where $h_p(x,y)$ tends to zero as $p$ tends to infinity. Note that we recover Corollary 2.1 by setting $x=y=1$. We summarize
this example in the following result. Similar results hold for the $f_{p,q}$ for $q \ge 3$. See [D4].

\begin{proposition} For $x +\sqrt{x^2+4y} > 2y$, the limit, as $p$ tends to infinity, of the left-hand side of (14.2) exists and equals 
${x + \sqrt{x^2 + 4y} \over 2}$.
\end{proposition}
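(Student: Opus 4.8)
The plan is to argue directly from the closed form (7). First I would introduce $\alpha = \frac{x+\sqrt{x^2+4y}}{2}$ and $\beta = \frac{x-\sqrt{x^2+4y}}{2}$, the two roots of $t^2 - xt - y = 0$, so that $\alpha+\beta = x$ and $\alpha\beta = -y$. With this notation (7) reads $f_{p,2}(x,y) = \alpha^p + \beta^p + (-1)^{p+1}y^p$. Restricting to the geometrically relevant range $x,y>0$, we have $\alpha > 0 > \beta$, so $\alpha^p$ is positive and I would factor it out: $f_{p,2} = \alpha^p\bigl(1 + h_p\bigr)$, where $h_p = (\beta/\alpha)^p + (-1)^{p+1}(y/\alpha)^p$. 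This isolates the $h_p$ of (14.2).

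The core of the proof is to show $h_p \to 0$, i.e. that each of the two ratios has absolute value strictly below $1$. The hypothesis enters precisely for the second ratio: $x+\sqrt{x^2+4y} > 2y$ is equivalent to $\alpha > y$, hence $0 < y/\alpha < 1$ and $(y/\alpha)^p \to 0$. For the first ratio no hypothesis is needed, since $\alpha$ is automatically the root of larger absolute value: from $\alpha^2 = x\alpha + y > y = \alpha|\beta|$ we get $\alpha > |\beta|$, so $|\beta/\alpha| < 1$ and $(\beta/\alpha)^p \to 0$. Combining, $h_p \to 0$.

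Finally I would take $p$-th roots. Since $h_p \to 0$, the factor $1 + h_p$ is positive and bounded away from $0$ and $\infty$ for large $p$, so $(1+h_p)^{1/p} = \exp\!\bigl(\tfrac{1}{p}\log(1+h_p)\bigr) \to 1$. Therefore $\bigl(f_{p,2}(x,y)\bigr)^{1/p} = \alpha\,(1+h_p)^{1/p} \to \alpha = \frac{x+\sqrt{x^2+4y}}{2}$, which is the assertion; putting $x=y=1$ recovers Corollary 2.1. The one place meriting care is the separation of the two ratios: the dominance of $\alpha^p$ over $\beta^p$ is forced by the root structure alone, whereas its dominance over $y^p$ is exactly what the stated inequality guarantees---indeed, when that inequality fails the term $y^p$ takes over and the limit becomes $y$ instead.
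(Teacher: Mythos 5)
Your proof is correct and takes essentially the same route as the paper: the paper obtains (14.2) precisely by factoring the dominant middle term of (7) out of $f_{p,2}$ and asserting that the remainder $h_p$ tends to zero, which is exactly your decomposition $f_{p,2} = \alpha^p(1+h_p)$ followed by taking $p$-th roots. Your checks that $\alpha > |\beta|$ always and that the hypothesis is equivalent to $\alpha > y$ simply supply the details the paper leaves implicit.
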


It is also possible to compute $\Phi_{\Gamma_{(p,p-1)}}$ exactly. After some computation we obtain the following:

$$ \Phi_\Gamma (z,{\overline z}) = |z_1|^{2p} + |z_2|^{2p} + \sum_j n_j (|z_1|^2 |z_2|^2)^j = x^p + y^p + \sum n_j (xy)^j, \eqno (15) $$
where the $n_j$ are integers. They are $0$ when $2j > p$, and otherwise non-zero.
In this range $n_j > 0$ when $j$ is odd, and $n_j < 0$ when $j$ is even.
Explicit formulas for the $n_j$ exist; in fact they are closely related to the coefficients for $f_{p,2}$. See [D3].
To see what is going on, we must consider the four possibilities for $p$ modulo $(4)$.

We illustrate by listing the polynomials of degrees $4,5,6,7$. As above
we put $|z_1|^2 = x$ and $|z_2|^2 = y$. We obtain:

$$  f_{4,3}(x,y) = x^4 + y^4+ 4xy - 2x^2 y^2 \eqno (16.4) $$

$$  f_{5,4}(x,y) = x^5 + y^5+ 5xy - 5x^2y^2 \eqno (16.5) $$

$$  f_{6,5}(x,y) = x^6 + y^6+ 6xy - 9x^2y^2 + 2 x^3 y^3 \eqno (16.6) $$

$$  f_{7,6}(x,y) = x^7 + y^7+ 7xy - 14 x^2y^2 + 7 x^3 y^3. \eqno (16.7) $$

For $\Gamma(p,p-1)$
one can show the following. When $p=4k$ or $p=4k+1$, we have $k+2$ positive coefficients and $k$ negative coefficients.
When $p=4k+2$ or $p=4k+3$, we have $k+3$ positive coefficients and $k$ negative coefficients.
For $q>2$ in general one obtains some negative coefficients when expanding $f_{p,q}$, and hence the target must be a (non-spherical) hyperquadric.
The paper [LWW] provides a method for determining the sign of the coefficients.

Given a finite subgroup $\Gamma$ of $U(n)$, 
the invariant polynomial $\Phi_\Gamma$ is Hermitian symmetric,
and hence its underlying matrix of coefficients is Hermitian. We let $N_+(\Gamma)$ denote the number of
positive eigenvalues of this matrix, and we let $N_-(\Gamma)$ denote the number of negative eigenvalues.
When $\Gamma$ is cyclic of order $p$ we sometimes write $N_+(p)$ instead of $N_+(\Gamma)$, but the reader should
be warned that the numbers $N_+$ and $N_-$ depend upon $\Gamma$ and not just $p$. 
The ratio $R_p = { N_+(p) \over N_+(p) + N_-(p)}$ is of some interest, but it can be hard to compute.
We therefore consider its asymptotic behavior.

For the class of groups considered above Theorem 4.1 holds. It is a special case of a result to appear 
in the doctoral thesis  [G] of Grundmeier, who has found the limit of $R_p$ for many classes of groups (not necessarily cyclic)
whose order depends on $p$. Many different limiting values can occur.
Here we state only the following simple version which applies to the three classes under consideration.

\begin{proposition} For the three classes of cyclic groups whose invariant polynomials
are given by (13), (14), and (15), the limit of $R_p$
as $p$ tends to infinity exists. In the first two cases the limit is $1$. When $\Phi_\Gamma$ satisfies (15),
the limit is ${1 \over 2}$. \end{proposition}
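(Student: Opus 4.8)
The plan is to exploit the fact that each of the three families $\Gamma(p,1)$, $\Gamma(p,2)$, $\Gamma(p,p-1)$ is generated by a \emph{diagonal} unitary matrix. First I would record the elementary but decisive observation that for such a representation $\Phi_\Gamma(z,\overline z)$ is a polynomial in $x=|z_1|^2$ and $y=|z_2|^2$ alone. Polarizing, a term $c\,x^a y^b$ becomes $c\,(z_1\overline w_1)^a(z_2\overline w_2)^b = c\,z^\alpha\overline w^\alpha$ with $\alpha=(a,b)$, so only the diagonal entries of the coefficient matrix are nonzero. That matrix is therefore diagonal with real entries, its eigenvalues are exactly the coefficients of $\Phi_\Gamma$ as a polynomial in $x$ and $y$, and consequently $N_+(\Gamma)$ (respectively $N_-(\Gamma)$) equals the number of strictly positive (respectively strictly negative) such coefficients. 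After this reduction the proposition is purely a matter of counting signs, and $R_p$ is well defined since $N_+\ge 1$ always.

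For (13) the polynomial is $(x+y)^p=\sum_{k=0}^p\binom{p}{k}x^k y^{p-k}$; every coefficient is positive, so $N_-=0$, $N_+=p+1$, and $R_p=1$ for every $p$. For (14) I would appeal to (14.1): the coefficient of $x^p$ is $1$, each $n_j$ is positive, and the only coefficient that can be negative is that of $y^p$, namely $(-1)^{p+1}$, which is negative precisely when $p$ is even. Hence $N_-\le 1$ for all $p$, while the $\lfloor p/2\rfloor$ mixed terms $x^{p-2j}y^j$ force $N_+\ge 1+\lfloor p/2\rfloor\to\infty$ (consistent with the targets $Q(r+1,0)$ and $Q(r+1,1)$ recorded above, which give $N_+=r+1$). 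Since $N_-$ stays bounded while $N_+\to\infty$, we get $R_p\to 1$.

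The substantive case is (15). Here I would use the sign pattern already isolated for $\Gamma(p,p-1)$: in $x^p+y^p+\sum_j n_j(xy)^j$ the two leading coefficients are positive, and for $1\le j\le\lfloor p/2\rfloor$ the coefficient $n_j$ is strictly positive when $j$ is odd and strictly negative when $j$ is even. Counting odd and even indices in this range — equivalently, quoting the tabulation above ($k+2$ positive and $k$ negative coefficients when $p=4k$ or $4k+1$, and $k+3$ positive and $k$ negative when $p=4k+2$ or $4k+3$) — gives $N_+=\tfrac{p}{4}+O(1)$ and $N_-=\tfrac{p}{4}+O(1)$, hence $N_++N_-=\tfrac{p}{2}+O(1)$. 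Therefore $R_p=\bigl(\tfrac{p}{4}+O(1)\bigr)/\bigl(\tfrac{p}{2}+O(1)\bigr)\to\tfrac12$, and a direct check of each of the four residue classes modulo $4$ confirms the same limit.

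I expect the only genuine obstacle to lie in the \emph{input} rather than the argument: the entire computation rests on the exact signs of the coefficients, and in particular on the alternation by the parity of $j$ in case (15). That sign information is the delicate point, but it is furnished by the exact expansions of $f_{p,2}$ and $f_{p,p-1}$ established in [D3], [D4], and [LWW]; granting it, the three limits follow from the bookkeeping above. A minor point to handle with care is verifying in case (14) that $y^p$ genuinely contributes a distinct negative eigenvalue, which it does since $y^p$ coincides with none of the mixed terms $x^{p-2j}y^j$.
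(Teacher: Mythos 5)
Your proof is correct and follows exactly the route the paper intends: the paper states this proposition without an explicit proof (deferring to Grundmeier's thesis [G]), but the sign counts it records immediately beforehand --- all coefficients of $(x+y)^p$ positive; at most one negative coefficient, namely that of $y^p$ for even $p$, in (14.1); and $k+2$ (respectively $k+3$) positive versus $k$ negative coefficients for (15) --- are precisely the input your bookkeeping uses, and your checks against the four residue classes mod $4$ match the paper's tabulation. Your one genuinely explicit addition, that for a diagonal representation the coefficient matrix of $\Phi_\Gamma$ is itself diagonal so that $N_+(\Gamma)$ and $N_-(\Gamma)$ simply count the signs of the coefficients in $x=|z_1|^2$, $y=|z_2|^2$, is the reduction the paper uses implicitly throughout Section 4, so there is no substantive difference in approach.
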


\begin{remark} For the class of groups $\Gamma(p,q)$ the limit $L_q$ of $R_p$ exists and depends on $q$.
If one then lets $q$ tend to infinity, the resulting limit equals ${3 \over 4}$. Thus the asymptotic result
differs from the limit obtained by setting $q=p-1$ at the start. The subtlety of the situation is evident.
\end{remark}

\section{Metacyclic groups}

Let $C_p$ denote a cyclic group of order $p$.
A group $G$ is called {\it metacyclic} if there is an exact sequence of the form

$$ 1 \to C_p \to G \to C_q \to 1.$$
Such groups are also described in terms of two generators $A$ and $B$ such that $A^p=I$, $B^q = I$, and
$AB=B^m A$ for some $m$. In this section we will consider metacyclic subgroups of $U(2)$ defined as follows.
Let $\omega$ be a primitive $p$-th root of unity, and let $A$ be the following element of $U(2)$:

$$ 
\begin{pmatrix}
\omega     & 0 \\
0 & {\overline \omega}
\end{pmatrix}.
 \eqno (17) $$

For these metacyclic groups we obtain in (23) a formula for the invariant polynomials in terms of known
invariant polynomials for cyclic groups. We write $C(p,p-1)$ for the cyclic subgroup of $U(2)$ generated by $A$. Its invariant polynomial is
$$ \Phi_{C(p,p-1)} = 1- \prod_{k=0}^p (1 - \langle A^k z, z \rangle). \eqno (18)$$ 

Now return to the metacyclic group $\Gamma$.
Each group element of $\Gamma$ will be of the form $B^j A^k$ for appropriate exponents $j,k$. Since $B$ is unitary,
$B^* = B^{-1}$. We may therefore write

$$ \langle B^j A^k z, w \rangle = \langle A^k z,  B^{-j} w \rangle. \eqno (19) $$
We use (19) in the product
defining $\Phi_\Gamma$ to obtain the following formula:

$$ \Phi_\Gamma(z,{\overline z}) = 1 - \prod_{k=0}^{p-1} \prod_{j=0}^{q-1} (1 - \langle B^j A^k z, z \rangle)
= 1- \prod_{k=0}^{p-1} \prod_{j=0}^{q-1} (1 - \langle A^k z, B^{-j} z \rangle). \eqno (20) $$
Notice that the term 

$$ \prod_{k=0}^{p-1} (1 - \langle A^k z, B^{-j} z \rangle) \eqno (21) $$
can be expressed in terms of the invariant polynomial for the cyclic group $C(p,p-1)$.
We have

$$ \prod_{k=0}^{p-1} (1 - \langle A^k z, B^{-j} z \rangle) = 1 - \Phi_{C(p,p-1)}(z, B^{-j}z), \eqno (22) $$
and hence we obtain

$$ \Phi_\Gamma(z,{\overline z}) = 1- \prod_{j=0}^{q-1} \left( 1 - \Phi_{C(p,p-1)}(z, B^{-j}z) \right).  \eqno (23) $$
The invariance of $\Phi_\Gamma$ follows from the definition, but this property
is not immediately evident from this polarized formula. The other properties from Theorem 1.1 are evident in this version of the formula. 
We have $\Phi_\Gamma(0,0)=0$. Also,
$\Phi_\Gamma(z, {\overline z}) = 1$ on the unit sphere, because of the term when $j=0$. The degree in $z$ is $pq$
because we have a product of $q$ terms each of degree $p$. 

The simplest examples of metacyclic groups are the dihedral groups.
The dihedral group $D_p$ is the group of symmetries of a regular polygon of $p$ sides.
The group $D_p$ has order $2p$; it is generated by two elements $A$ and $B$, which
satisfy the relations $A^p=I$, $B^2=I$, and $AB=BA^{p-1}$. Thus $A$ corresponds
to a rotation and $B$ corresponds to a reflection. We may represent $D_p$ as a subgroup
of $U(2)$ by putting 

$$ 
A= \begin{pmatrix}
\omega     & 0 \\
0 & {\omega^{-1}}
\end{pmatrix}
 \eqno (24.1 ) $$

$$ 
B= \begin{pmatrix}
0 & 1 \\
1 & 0
\end{pmatrix}.
 \eqno (24.2) $$

Formula (23) for the invariant polynomial simplifies because the product in (23) has only two terms. We obtain the following result,
proved earlier in [D2].

\begin{theorem} The invariant polynomial for the above representation of $D_p$
satisfies the following formula:

$$ \Phi(z,{\overline z}) = f_{p,p-1}(|z_1|^2, |z_2|^2) + f_{p,p-1}(z_2 {\overline
z_1}, z_1 {\overline z_2} ) -
f_{p,p-1}(|z_1|^2, |z_2|^2) f_{p,p-1}(z_2 {\overline
z_1}, z_1 {\overline z_2} ).  \eqno (25) $$
\end{theorem}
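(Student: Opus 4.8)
The plan is to specialize the general metacyclic formula (23) to the present representation, where $B^2 = I$ and hence $q = 2$. With $q = 2$ the product over $j$ in (23) collapses to just the two factors $j = 0$ and $j = 1$, giving
$$ \Phi_\Gamma(z,{\overline z}) = 1 - \left(1 - \Phi_{C(p,p-1)}(z, z)\right)\left(1 - \Phi_{C(p,p-1)}(z, B^{-1}z)\right). $$
The entire proof then reduces to identifying these two polarized factors with the two copies of $f_{p,p-1}$ appearing in (25), after which a single elementary expansion closes the argument.

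First I would handle the $j=0$ factor. Evaluating the polarized cyclic polynomial from (18) on the diagonal gives $\langle A^k z, z\rangle = \omega^k |z_1|^2 + \omega^{-k}|z_2|^2$, so by the definition of $f_{p,p-1}$ recorded in (15) we have $\Phi_{C(p,p-1)}(z,z) = f_{p,p-1}(|z_1|^2, |z_2|^2)$. This is the first summand in (25).

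Next I would compute the $j=1$ factor, which is the only step demanding genuine care. Since $B^2 = I$ we have $B^{-1} = B$, and $Bz = (z_2, z_1)$; a short calculation of the inner product then yields $\langle A^k z, B^{-1}z\rangle = \omega^k z_1 {\overline z_2} + \omega^{-k} z_2 {\overline z_1}$. Substituting this into the product defining the cyclic polynomial shows that $\Phi_{C(p,p-1)}(z, B^{-1}z)$ is exactly the cyclic polynomial with its two arguments replaced by $z_1{\overline z_2}$ and $z_2{\overline z_1}$, that is, $f_{p,p-1}(z_1{\overline z_2}, z_2{\overline z_1})$. Because the theorem writes the arguments in the opposite order, I would invoke the symmetry of $f_{p,p-1}$: this is immediate from (15), since $f_{p,p-1}(x,y)=x^p+y^p+\sum_j n_j (xy)^j$ is manifestly symmetric in $x$ and $y$. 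Hence the $j=1$ factor equals the second summand $f_{p,p-1}(z_2{\overline z_1}, z_1{\overline z_2})$ in (25).

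Finally I would substitute both identifications into the two-factor product and expand $1-(1-a)(1-b)=a+b-ab$ with $a=f_{p,p-1}(|z_1|^2,|z_2|^2)$ and $b=f_{p,p-1}(z_2{\overline z_1}, z_1{\overline z_2})$, which produces (25) verbatim. I expect no serious obstacle here: the result is essentially a specialization of (23) to $q=2$, and the only points requiring attention are the correct evaluation of the coordinate-swapped inner product in the $j=1$ factor and the appeal to the symmetry of $f_{p,p-1}$ to match the stated order of arguments.
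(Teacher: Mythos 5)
Your proposal is correct and follows the same route as the paper: the paper obtains (25) precisely by specializing the metacyclic formula (23) to the dihedral case $q=2$, where the product collapses to two factors. Your write-up simply makes explicit the details the paper leaves implicit (the evaluation of $\langle A^k z, B^{-1}z\rangle$, the identification of each factor with $f_{p,p-1}$, and the symmetry of $f_{p,p-1}$ in its arguments), all of which check out.
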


\section{An application; failure of rigidity}

In this section we use the group invariant approach
to construct the first examples of polynomial mappings of degree $2p$ from $Q(2,2p+1)$ to $Q(N(p), 2p+1)$.
The key point of these examples is that the number of negative eigenvalues is preserved. The mappings
illustrate the failure of rigidity in the case where we keep the number of negative eigenvalues
the same but we are allowed to increase the number of positive eigenvalues
sufficiently. The mappings arise from part of a general theory being developed [DLe2] by the author and J. Lebl.
As mentioned in the introduction, the additional assumption that the mapping preserves sides of the hyperquadric
does force linearity in this context. [BH]

\begin{theorem} Let $2p+1$ be an odd number with $p\ge 1$. There is an integer $N(p)$ and a holomorphic polynomial mapping
$g_p$ of degree $2p$ such that

$$ g_p : Q(2,2p+1) \to Q(N(p), 2p+1). $$
and $g_p$ maps to no hyperquadric with smaller numbers of positive or negative eigenvalues.
\end{theorem}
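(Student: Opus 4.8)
The plan is to recast the statement as the construction of a single Hermitian polynomial. Write $X=|z_1|^2+|z_2|^2$ for the positive form of the domain and $Y=\sum_{k=1}^{2p+1}|w_k|^2$ for its negative form, so that $Q(2,2p+1)$ is the set $X-Y=1$. A polynomial map $(F,G)$ carries $Q(2,2p+1)$ into $Q(N(p),2p+1)$ exactly when $\|F\|^2-\|G\|^2=1$ holds on $X-Y=1$, and the image lands in no smaller hyperquadric precisely when the underlying Hermitian matrix, read modulo the ideal generated by $X-Y-1$, is reduced with signature $(N(p),2p+1)$. The mechanism behind the failure of rigidity is that this signature is \emph{not} an invariant of the restriction to the quadric: two extensions of the constant $1$ that agree on $X-Y=1$ can have entirely different signatures off it. The task is therefore to produce, among the degree-$2p$ extensions of $1$, one with exactly $2p+1$ negative eigenvalues.

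The construction I would use loads all of the degree onto the positive side. Take the negative part to be the full tensor power, $G=z^{\otimes 2p}$, so that $\|G\|^2=X^{2p}$; since the monomials of degree $2p$ in $(z_1,z_2)$ span a space of dimension $2p+1$, this contributes \emph{exactly} $2p+1$ negative eigenvalues. On the quadric one has $X=1+Y$, so $X^{2p}=(1+Y)^{2p}$, which forces \[ \|F\|^2 = 1 + (1+Y)^{2p}. \] The right-hand side is an honest squared norm in the variables $w_k$; expanding it by the binomial theorem displays $F$ explicitly and identifies $N(p)$ as the number of monomials that appear. The invariant polynomial $f_{p,2}$ of formula (7), whose coefficients are understood completely, is the systematic device for manufacturing and bookkeeping such extensions, and it is through it that the group-theoretic material of the earlier sections enters; the binomial identity above is the transparent special case that makes the signature count immediate.

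With this polynomial fixed, the eigenvalue count is read off at once. Every monomial occurring in $F$ lies among the constants and the powers $w^{\otimes i}$, while every monomial occurring in $G$ is a degree-$2p$ monomial in $z$; the two families are disjoint, so the representing Hermitian form is diagonal in the monomial basis and its signature is exactly $(N(p),2p+1)$. The map has degree $2p$ and is manifestly non-linear, which already exhibits the failure of rigidity: the number of negative eigenvalues is unchanged from the domain while the number of positive eigenvalues has grown.

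The step I expect to be the main obstacle is the control of the signature itself, namely keeping the negative eigenvalues down to $2p+1$. The naive ways of raising the degree all fail here: tensoring the relation $X-Y=1$ against itself, or substituting $y\mapsto -Y$ into the degree-$2p$ invariant polynomial $f_{2p,2}(x,y)$, produces high tensor powers $w^{\otimes j}$ carrying a negative sign, and the negative count explodes to order $\binom{4p}{2p}$. The essential insight is that the quadric relation lets one trade these explosive negative powers for positive ones by writing $X^{2p}=(1+Y)^{2p}$, leaving a single clean tensor power on the negative side. Once the extension is chosen in this way, the remaining minimality assertion becomes tractable: the components of $F$ and of $G$ are distinct monomials, hence linearly independent, and the only point requiring care is to check that this independence persists modulo the quadric ideal, so that neither the positive nor the negative count can be lowered by any automorphism of the target hyperquadric.
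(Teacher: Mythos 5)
Your construction does produce a degree-$2p$ polynomial map carrying $Q(2,2p+1)$ into $Q(N(p),2p+1)$: the identity $1+(1+Y)^{2p}-X^{2p}=1$ on $X-Y=1$ is correct, the negative side has exactly $2p+1$ components, and the degree is right. But your map fails the final clause of the theorem, which is its entire content. Your $F$ contains the constant component $\sqrt{2}$, so the image of $g_p$ lies in the affine hyperplane $\{\zeta_1=\sqrt{2}\}$ of the target. Intersecting $Q(N(p),2p+1)$ with that hyperplane turns the defining equation $2+\sum_{j\ge 2}|\zeta_j|^2-\|\xi\|^2=1$ into $\|\xi\|^2-\sum_{j\ge 2}|\zeta_j|^2=1$, a hyperquadric with $2p+1$ positive and $N(p)-1$ negative eigenvalues. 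Equivalently, deleting the constant component and swapping signs exhibits the very same map as $(G;F_2,\dots,F_{N(p)}):Q(2,2p+1)\to Q(2p+1,N(p)-1)$. So your $g_p$ \emph{does} map into a hyperquadric with a smaller number of positive eigenvalues, and in its reduced, non-degenerate form it does not preserve the number of negative eigenvalues at all --- which is precisely the phenomenon the theorem asserts. Keep in mind that without the minimality clause the statement is vacuous: for \emph{any} $h$ with $2p+1$ components of degree $2p$, the map $(1,h;h)$ sends all of ${\bf C}^{2p+3}$ into $Q(2p+2,2p+1)$. Your map is a less trivial instance of the same degeneracy. You even state the correct criterion --- linear independence of the components together with $1$ after restriction to the quadric --- but your own map fails it, since $\sqrt{2}$ and $1$ are dependent; this is the check you deferred to the end, and it is exactly where the construction breaks.

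Your reason for rejecting the substitution into $f_{2p,2}$ is also backwards, and seeing why explains what the group theory buys. The paper substitutes the \emph{two positive} domain variables into $y$, the variable carrying the unique negative coefficient $-y^{2p}$, and the $2p+1$ negative domain variables into $x$, all of whose powers carry positive coefficients. The negative terms are then exactly the expansion of $-(Y_1+Y_2)^{2p}$, namely $2p+1$ of them; the explosion of order $\binom{4p}{2p}$ happens only on the positive side, where it is harmless. The group invariance of $\Gamma(2p,2)$ enters precisely here: $f_{2p,2}$ contains only even powers of $x$, so the sign flip $x\mapsto -x$, needed to pass from the sphere relation $x+y=1$ to the hyperquadric relation $-x+y=1$, creates no new negative coefficients. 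In your notation the resulting Hermitian polynomial is $f_{2p,2}(Y,X)=Y^{2p}-X^{2p}+\sum_j n_j\,Y^{2p-2j}X^j$: its negative part is your $X^{2p}$, but its positive part $Y^{2p}+\sum_j n_j\,Y^{2p-2j}X^j$ has \emph{no constant term} and consists of distinct non-constant monomials, so the components together with $1$ stay independent on the quadric and the image lies in no smaller hyperquadric. Replacing your extension $1+(1+Y)^{2p}$ by this constant-free one repairs the proof; for $p=1$ it is $Y^2+2X$, recovering the paper's Example 6.2.
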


\begin{proof} We begin with the group $\Gamma(2p,2)$. We expand the formula given in (7) with $p$ replaced by $2p$. The result
is a polynomial $f_{2p,2}$ in the two variables $x,y$ with the following properties. First, the coefficients are positive except
for the coefficient of $y^{2p}$ which is $-1$. Second, we have $f_{2p,2}(x,y)=1$ on $x+y=1$. Third, because of the group invariance,
only even powers of $x$ arise. We therefore can replace $x$ by $-x$ and obtain a polynomial $f(x,y)$ such that
$f(x,y)=1$ on $-x+y=1$ and again, all coefficients are positive except for the coefficient of $y^{2p}$.
Next replace $y$ by $Y_1 + Y_2$. We obtain a polynomial in $x,Y_1,Y_2$ which has precisely $2p+1$ terms with negative coefficients.
These terms arise from expanding $-(Y_1+Y_2)^{2p}$. All other terms have positive coefficients. This polynomial takes the value
$1$ on the set $-x + Y_1+Y_2=1$. Now replace $x$ by $X_1 + ... + X_{2p+1}$.

We now have a polynomial $W(X,Y)$ that is $1$ on the set given by
$$ -\sum_1^{2p+1} X_j + \sum_1^2 Y_j= 1. $$
It has precisely $2p+1$ terms with negative coefficients.
There are many terms with positive coefficients; suppose that the number is $N(p)$.
In order to get back to the holomorphic setting, 
we put $X_j = |z_j|^2$ for $1 \le j \le 2p+1$ and we put $Y_1 = |z_{2p+2}|^2$ and $Y_2 = |z_{2p+3}|^2$.
We note that this idea (an example of the moment map)
has been often used in this paper, as well as in the author's work on proper mappings between balls; see for example [D1] and [DKR].
Let $g_p(z)$ be the mapping, determined up to a diagonal unitary matrix, with

$$ \sum_{j=1}^{N(p)} |g_j(z)|^2 - \sum_{j=1}^{2p+1} |g_j(z)|^2 = W(X,Y). \eqno (26) $$
Each component of $g_p$ is determined by (26) up to a complex number of modulus $1$.
The degree of $g_p$ is the same as the degree of $w$. We obtain all the claimed properties. \end{proof}

\begin{example} We write out everything explicitly when $p=1$. Let $c=\sqrt{2}$.
The proof of Theorem 6.1 yields the polynomial mapping $g:Q(2,3) \to Q(8,3)$ of degree $2$ defined by

$$ g(z) =  (z_1^2,z_2^2,z_3^2, c z_1z_2,c z_1 z_3, c z_2 z_3, c z_4, c z_5; z_4^2, c z_4 z_5, z_5^2).   \eqno (27) $$
Notice that we used a semi-colon after the first eight terms to highlight that $g$ maps to $Q(8,3)$.
Summing the squared moduli of the first eight terms yields
$$ (|z_1|^2+|z_2|^2 + |z_3|^2)^2 + 2( |z_4|^2 + |z_5|^2). \eqno (28) $$
Summing the squared moduli of the last three terms yields
$$ (|z_4|^2 + |z_5|^2)^2. \eqno (29) $$
The set $Q(2,3)$ is given by 
$$  |z_4|^2 + |z_5|^2 -1 = |z_1|^2 + |z_2|^2 +|z_3|^2. $$
On this set we obtain $1$ when we subtract (29) from (28).
\end{example}

\section*{Bibliography}

[BER] Baouendi, M. Salah, Ebenfelt, Peter, and Rothschild, Linda Preiss,
Real submanifolds in complex space and their mappings, Princeton
Mathematical Series 47, Princeton University Press, Princeton, NJ, 1999.

[BH] Baouendi, M. S.and Huang, X., Super-rigidity for holomorphic mappings between hyperquadrics with positive signature,
J. Differential Geom.  69  (2005),  no. 2, 379-398. 

[D1] D'Angelo, J. P., Several Complex Variables and the Geometry of Real Hypersurfaces, CRC Press, Boca Raton, 1992. 

[D2] D'Angelo, J. P., Number-theoretic properties of certain CR mappings, J. of Geometric Analysis,
Vol. 14, Number 2 (2004), 215-229.

[D3] D'Angelo, J. P.,  Invariant holomorphic maps, J. of Geometric Analysis, Vol. 6, 1996, 163-179.

[D4] D'Angelo, J. P., Asymptotics for invariant polynomials, Complex Variables and Elliptic Equations,
Vol. 52, No. 4 (2007), 261-272.

[DKR] D'Angelo, J. P., Kos, \v Simon, and Riehl E., A sharp bound for the degree of proper monomial
mappings between balls, J. of Geometric Analysis, Vol. 13, Number 4 (2003), 581-593. 

[DL] D'Angelo, J. P.  and Lichtblau, D. A., Spherical space forms, CR maps,
and proper maps between balls, J. of Geometric Analysis, Vol. 2, No. 5(1992), 391-416.

[DLe] D'Angelo, J. P.  and Lebl,J.,  Complexity results for CR mappings between spheres, International J. Math, Vol. 20, No. 2 (2009), 1-18.

[DLe2] D'Angelo, J. P.  and Lebl, J. (work in progress).

[F1] Forstneric, F., Extending proper holomorphic mappings of positive codimension,  Invent. Math.  95  (1989),  no. 1, 31-61. 

[F2] Forstneric, F., Proper holomorphic mappings: a survey. Several complex variables (Stockholm, 1987/1988), 297-363, Math. Notes, 38, Princeton Univ. Press, Princeton, NJ, 1993. 

[G] Grundmeier, D., Doctoral thesis, University of Illinois, expected 2010.

[Li1] Lichtblau D. A., Invariant proper holomorphic maps between balls. Indiana Univ. Math. J. 41 (1992), no. 1, 213-231. 

[Li2] Lichtblau, D. A., Invariant proper holomorphic maps between balls, PhD thesis, University of Illinois, 1991.

[LWW] Loehr, N. A. , Warrington, G. S.,  and Wilf, H. S., The combinatorics of a three-line circulant determinant,
Israel J. Math. 143 (2004), 141-156.

[S] Smith, Larry, Polynomial Invariants of Finite Groups, Research Notes in Mathematics, 6. A K Peters, Ltd., Wellesley, MA, 1995.

\end{document}